\documentclass[12pt]{article}
\usepackage{amsmath,amsthm,amsfonts,amssymb,amscd}
\usepackage{cite}
\usepackage{soul}
\usepackage{xcolor}
\usepackage{bbm}
\oddsidemargin=0pt \evensidemargin=0pt \textwidth=6.5in

\renewcommand{\emptyset}{\varnothing}

\headsep=1cm

\newtheorem{thm}{Theorem}[section]
\newtheorem{exampleee}{Example}[section]
\newtheorem{lem}{Lemma}[section]
\newtheorem{dfn}{Definition}[section]
\newtheorem{propn}{Proposition}[section]
\usepackage{amsmath}
\usepackage{cite}

\newcommand{\nexto}{\kern -0.54em}

\newcommand{\dZ}{{\cal Z \kern -0.7em Z}}
\newcommand{\dC}{{\rm\hbox{C \kern-0.8em\raise0.2ex\hbox{\vrule height5.4pt
width0.7pt}}}}
\newcommand{\dQ}{{\rm\hbox{Q \kern-0.85em\raise0.25ex\hbox{\vrule height5.4pt
width0.7pt}}}}

\usepackage{epsfig}
\usepackage{latexsym}


\newcommand{\CC}{\mathcal{C}}
\newcommand{\NN}{\mathbb{N}}

\newcommand{\RR}{\mathbbm{R}}


\begin{document}

\title{The inexact projected gradient method for quasiconvex vector optimization problems}
\author{
J.Y. Bello Cruz\footnote{IME - Federal University of Goi\'as,
Goi\^ania, GO, Brazil. E-mail: yunier.bello@gmail.com} \and G.C.
Bento\footnote{IME - Federal University of Goi\'as, Goi\^ania, GO,
Brazil. E-mail: glaydston@ufg.br} \and G. Bouza Allende
\footnote{Faculty of Mathematics and Computer Science, University of
Habana, Cuba. E-mail: gema@matcom.uh.cu}\and R.F.B.
Costa\footnote{IME - Federal University of Goi\'as, Goi\^ania, GO,
Brazil. E-mail: raynerbarbosa@gmail.com} }

\maketitle

\begin{abstract}
\noindent Vector optimization problems are a generalization  of
multiobjective optimization in which the preference order is related
to an arbitrary closed and convex cone, rather than the nonnegative
octant. Due to its real life applications, it is important to have
practical solution approaches for computing. In this work, we
consider the inexact projected gradient-like method for solving
smooth constrained vector optimization problems. Basically, we prove
global convergence of any sequence produced by the method to a
stationary point assuming that the objective function of the problem
is $K$-quasiconvex, instead of the stronger $K$-convexity assumed in
the literature.
\end{abstract}

\noindent{\bf Keywords:} Gradient-like method; Vector optimization;
$K$-- quasiconvexity.

\medskip

\noindent{\bf Mathematical Subject Classification (2008):} 90C26;
90C29; 90C31.

\section{Introduction}\label{sec_intro}
In many applications, it is desired to compute a point such that
there exists not a preferred option. Sometimes this preference is
mathematically described by means of a cone $K$ and a function $F$,
i.e. a point $x$ is preferred to $y$ if the difference of the
evaluations of the function belongs to the cone. This defines the
vectorial optimization problem $\min F(x)$ in which the order is
given the cone $K$. In this paper we assume that $F$ is a
quasiconvex function with respect to the cone $K$, as in many
micro-economical models devoted to maximize the utilities, which
usually are quasi-concave functions.

A popular technique for solving vectorial problems  is to scalarize
the function $F$; see \cite{gme,jahn0} and the references therein.
Many disadvantages on this scheme has been reported in, for
instance, \cite{luc}. Furthermore in the quasiconvex case, the
scalarizations may lead to solve non-quasiconvex models (sum of
quasiconvex functions may not be quasiconvex). An appealing solution
approach is to use descent directions like Gradient method  for
multiobjective and vector optimization problems; see, for instance,
\cite{fliege-svaiter,fu-mauricio,grana-iusem,grana-svaiter}. For
quasiconvex models, the convergence analysis has been obtained in
\cite{yunier,Bento2012-2,Bento2013-1,Bento2012-1}.

In the present paper we extend the convergence result of
\cite{yunier} for vector optimization and study the inexact version
of the projected gradient method. Our method was inspired by the
method proposed in \cite{Fukudainexact} and uses similar idea
exposed in \cite{yunier}. Assuming existence of solutions and
quasiconvexity of the vector function, we prove that every sequence
generated converges to a stationary point of the vector optimization
problem.

This article is organized as follows. Section \ref{secdef} contains
some basic definitions and preliminary material. In Section
\ref{secalgteste}, we present the inexact projected gradient method
for vectorial optimization. Section \ref{CR} provides the
convergence analysis of the method. Finally, in Section
\ref{remarksss} we give the final remarks.

\section{Basic definitions and preliminary material}\label{secdef}
In this section, we present the vector optimization problem  as well
as some definitions, notations and basic properties, which are used
throughout of this paper. For more details; see,
\cite{quasiconvex,Fukudainexact,grana-svaiter}.

Let $K \subset \RR^m$ be a nonempty closed, convex and pointed cone.
The partial order $``\preceq_{K}"$ induced by $K$ in $\RR^m$ is
defined as $u\preceq_{K}v$ if $v-u\in K$. A set $Y \subset \RR^m$ is
$K$--bounded if there exists $ z \in \RR^m$ such that, for all $y\in
Y$, $z \preceq_{K}y$. Assuming that $\mbox{int} (K)$ is nonempty,
$u\prec_{K}v$ if $v-u \in\mbox{int}(K)$. As reported in \cite[Remark
2.2]{quasiconvex}, if \mbox{int}$(K)$ is nonempty, the partial order
is directed, i.e, for all $y_1,y_2 \in \RR^m,$ there exists
$z\in\RR^m$ such that $y_1\preceq_{K}z$ and $y_2\preceq_{K}z$.

Given $K$,  its positive polar cone is $K^{\star}:= \{y\in \RR^m ;
\langle y,x \rangle \geq 0 \ \mbox{for all}\ x \in K\}$  and its
generator is  a compact set $G$ such that $K$ is the cone generated
by its convex hull.  As pointed out in \cite[Remark
3.2]{quasiconvex}, if $\mbox{int}(K)$ is nonempty, then $ K^{\star}
= \mbox{{co}}(\mbox{{conv}}(\mbox{{extd}}(K^{\star}))), $ remind
that $d \in \mbox{extd}(K)$ is an extreme direction if $d \neq 0$
and $d = d_1 + d_2$ for some $d_1, d_2 \in K$   implies that $d_1,
d_2 \in \RR_{+}d$.

Given $F:\RR^n\to {\mathbb R}^m$, a $\CC^1$ function and $C\subseteq
\RR^n$ a nonempty, closed and convex set, we consider the problem of
finding a  weakly efficient point of $F$ in $C$, i.e., a point
$x^*\in C$ such that there exists no other $x\in C$ with
$F(x)\prec_{K} F(x^*)$.  We denote this constrained problem as
\begin{equation}\label{Prel_P}
 \min_{K} F(x), \quad
\mbox{s.t. } x\in C.
\end{equation}
We  denote $J_F(x)$ as the Jacobian matrix of $F$ at $x$ and
$C-x^*=\{y-x^*: \; y\in C\}$. We say that $v$  is a descent
direction at $x \in C$ if there is not $v\in C - x$ such that
$J_{F}(x)v\prec_{K}0$. This leads to the definition of stationarity,
i.e, $x^*\in C$ is a stationary point if
\begin{equation}\label{stationarity}
-\mbox{int}(K)\cap J_F(x^*)(C-x^*)=\emptyset.
\end{equation}   For characterizing stationary points,   we consider
$\varphi:\RR^m\rightarrow\RR$, $$\varphi(y):=\mbox{max}\langle
y,\omega\rangle,\quad\mbox{ s.t. }\omega \in G.$$ As reported in
\cite{grana-svaiter}, $ -K=\{y\in \RR^m: \varphi(y)\leq0\} \ \
\mbox{and} \ \ -\mbox{int}(K)=\{y\in \RR^m: \varphi(y)<0\}$.
Furthermore, in \cite[Proposition 2]{grana-iusem}, it is shown that
$\varphi$  is positively homogeneous of degree $1$, subadditive and
a Lipschitz continuous with Lipschitz constant $L=1$. If $y\prec_K
z$ $(y\preceq_K z$,  respectively), $\varphi(y)<\varphi(z)$
($\varphi(y)\leq\varphi(z)$). Now we define
$h_{x}:C-x\rightarrow\RR$ by
$$h_{x}(v):=
\hat{\beta}\varphi(J_{F}(x)v) +\|v\|^2/2,$$ where $\hat{\beta} > 0$
and the following constrained parametric optimization problem:
\begin{equation} \label{direction exact} \mathop{\min} h_{x}(v), \quad \mbox{ s.t. }v\in C-x.\end{equation}
This problem has only one solution, namely $\bar{v}$, and it
fulfills that $ \bar{v}= P_{C-x}(-\hat{\beta}{J_{F}(x)}^{T}\omega)$
for some $\omega \in \mbox{conv}(G)$; see \cite{grana-iusem}. So, we
have the following:

\begin{dfn}\label{defsddf1} The projected gradient
direction function of $F$ is defined as $v:C\rightarrow\RR^n$, where
$v(x)$ is the unique solution of Problem (\ref{direction exact}).
The optimal value  function associated to (\ref{direction exact}) is
$\theta:C\rightarrow \RR$, where $\theta(x):= h_x(v(x))$.
\end{dfn}

\begin{lem}\label{lemma2012-1}
 $x$ is a stationary point of $F$, if and only if $\theta(x)=0$ and $v(x)=0$.  $v(x)$ and $\theta(x)$ are continuous functions.
\end{lem}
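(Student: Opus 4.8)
\emph{Characterization of stationarity.} I would begin with two elementary remarks: since $0\in C-x$ and $h_x(0)=\hat{\beta}\varphi(0)=0$, we always have $\theta(x)=h_x(v(x))\le 0$, and if $v(x)=0$ then automatically $\theta(x)=h_x(0)=0$; thus the condition ``$\theta(x)=0$ and $v(x)=0$'' is equivalent to the single condition $v(x)=0$. For the implication ``$x$ stationary $\Rightarrow v(x)=0$'', note that stationarity means, via the identity $-\mbox{int}(K)=\{y:\varphi(y)<0\}$, that $\varphi(J_F(x)v)\ge 0$ for every $v\in C-x$; hence $h_x(v)=\hat{\beta}\varphi(J_F(x)v)+\|v\|^2/2\ge\|v\|^2/2\ge 0=h_x(0)$, so $0$ minimizes $h_x$ on $C-x$, and uniqueness of the minimizer forces $v(x)=0$ (and then $\theta(x)=0$). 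For the converse, from $v(x)=0$ we get $h_x(v)\ge h_x(0)=0$ for all $v\in C-x$; replacing $v$ by $tv$ with $t\in(0,1]$ (allowed since $C-x$ is convex and contains $0$) and using positive homogeneity of degree $1$ of $\varphi$ gives $\hat{\beta}\,t\,\varphi(J_F(x)v)+t^2\|v\|^2/2\ge 0$; dividing by $t$ and letting $t\downarrow 0$ yields $\varphi(J_F(x)v)\ge 0$ for every $v\in C-x$, i.e. $x$ is stationary.

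\emph{Continuity.} I would rewrite $\theta(x)=\min_{y\in C}\, g(x,y)$ with $g(x,y):=\hat{\beta}\varphi\bigl(J_F(x)(y-x)\bigr)+\|y-x\|^2/2$, a function jointly continuous in $(x,y)$ because $F\in\CC^1$ makes $J_F$ continuous and $\varphi$ is Lipschitz. Fix $x_0\in C$ and a sequence $C\ni x_k\to x_0$. From $h_{x_k}(v(x_k))\le h_{x_k}(0)=0$ together with the Lipschitz bound $|\varphi(J_F(x_k)v(x_k))|\le\|J_F(x_k)\|\,\|v(x_k)\|$ I would deduce $\|v(x_k)\|\le 2\hat{\beta}\|J_F(x_k)\|$, which is bounded since $J_F$ is continuous and $x_k\to x_0$; hence $\{v(x_k)\}$ is bounded. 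Along any subsequence with $v(x_{k_j})\to\bar v$, closedness of $C$ and $x_{k_j}+v(x_{k_j})\to x_0+\bar v$ give $\bar v\in C-x_0$. To identify $\bar v$ with $v(x_0)$, I would take an arbitrary $v\in C-x_0$ and set $w_j:=P_C(x_{k_j}+v)-x_{k_j}\in C-x_{k_j}$; since $P_C$ is nonexpansive and $P_C(x_0+v)=x_0+v$, one gets $w_j\to v$. Passing to the limit in $h_{x_{k_j}}(v(x_{k_j}))\le h_{x_{k_j}}(w_j)$ by joint continuity yields $h_{x_0}(\bar v)\le h_{x_0}(v)$; as $v\in C-x_0$ was arbitrary and the minimizer is unique, $\bar v=v(x_0)$. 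Thus every convergent subsequence of the bounded sequence $\{v(x_k)\}$ has limit $v(x_0)$, so the whole sequence converges to $v(x_0)$, giving continuity of $v$; finally $\theta(x_k)=h_{x_k}(v(x_k))\to h_{x_0}(v(x_0))=\theta(x_0)$ by joint continuity, so $\theta$ is continuous.

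\emph{Main obstacle.} The only genuinely delicate point is in the continuity argument: the feasible set $C-x$ moves with $x$, so one must transport an arbitrary feasible direction at $x_0$ to feasible directions at the nearby points $x_k$. Using the (nonexpansive, hence continuous) metric projection $P_C$ onto $C$ is the clean way to produce such a transport; the remaining ingredients—coercivity of $h_x$ giving an a priori bound on $v(x_k)$, the Lipschitz property of $\varphi$, and the subsequence principle—are routine.
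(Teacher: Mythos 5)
Your proof is correct and complete. Be aware, however, that the paper does not actually prove this lemma: it disposes of it by citation, pointing to \cite{grana-iusem} (Proposition 3) for the characterization of stationarity and to \cite{fu-mauricio} (Proposition 3.4) for the continuity of $v(\cdot)$, with continuity of $\theta$ as an immediate corollary. Your argument supplies self-contained proofs of both facts. The stationarity part is the standard one: the observation that $\theta(x)\le h_x(0)=0$ always holds, the implication ``stationary $\Rightarrow$ $h_x\ge 0$ on $C-x$ $\Rightarrow$ $v(x)=0$ by uniqueness,'' and the converse via $h_x(tv)\ge 0$, division by $t$ and $t\downarrow 0$ using positive homogeneity of $\varphi$, all check out. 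For the continuity part, the genuinely delicate issue is exactly the one you flag — the feasible set $C-x$ moves with $x$ — and your transport $w_j=P_C(x_{k_j}+v)-x_{k_j}\in C-x_{k_j}$, which converges to $v$ by nonexpansiveness of $P_C$ and $P_C(x_0+v)=x_0+v$, resolves it cleanly; combined with the a priori bound $\|v(x_k)\|\le 2\hat{\beta}\|J_F(x_k)\|$ (from $h_{x_k}(v(x_k))\le 0$ and the Lipschitz property of $\varphi$ with $\varphi(0)=0$) and the subsequence principle, this yields continuity of $v$ and hence of $\theta$. In short, your route is more elementary and more informative than the paper's, which buys the reader a proof that can be read without consulting two external references; the cost is only length.
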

\begin{proof} For the first part see \cite[Proposition 3]{grana-iusem}.  For the continuity of $v(x)$;
see \cite[Proposition 3.4]{fu-mauricio}. The second part is a direct
consequence of this fact. \end{proof} Now we consider the inexact
case. Let us present the concept of approximate directions.
\begin{dfn}\label{defapprox}
Let $x \in C$ and $\sigma \in [0,1)$. A vector $v\in C-x$ is a
$\sigma$--approximate projected gradient direction at $x$ if $
h_{x}(v)\leq(1-\sigma)\theta(x). $
\end{dfn}
A particular class of $\sigma$--approximate directions for $F$ at
$x$ is given by the so called {\it scalarization compatible} (or
simply s--compatible) directions, i.e., those $v\in\RR^n$  such that
 \begin{equation}\label{eq:2012-350}
 v=P_{C-x}(-\hat{\beta}{J_{F}(x)}^{T}\omega), \mbox{ for some }\omega\in \mbox{conv}( G).
 \end{equation}
Relations between s--compatible and $\sigma$--approximate directions
can be found in \cite{Fukudainexact}. The convergence of the method
is obtained using the following definition and results.
\begin{dfn}[Definition 4.1 of \cite{Entropy}]\label{def:tec1}
Let $S$ be a nonempty subset of $\RR^n$. A sequence $(x^k)_{k\in
\NN}$ in $\RR^n$ is said to be quasi-Fej\'er convergent to $S$ if
and only if for all $x \in S$ there exist $k_0\geq 0$ and a sequence
$(\delta_k)_{k\in \NN}$ in $\RR_+$ such that
$\sum_{k=0}^\infty\delta_k<\infty$ and $$\| x^{k+1}-x\|^2 \leq \|
x^{k}-x\|^2 +\delta_k,$$ for all $k\geq k_0$.
\end{dfn}
\begin{lem}[Theorem 4.1 of \cite{Entropy}]\label{fejer}
If $(x^k)_{k\in \NN}$ in $\RR^n$ is {\rm quasi-Fej\'er} convergent
to some set $S$, then:
\begin{itemize}
\item [i)] The sequence $(x^k)_{k\in \NN}$ is bounded;

\item [ii)] If there exists an accumulation point, $x$, of the sequence $(x^k)_{k\in \NN}$ belongs to $S$, then $\{x^{k}\}$ is convergent to $x$.
\end{itemize}
\end{lem}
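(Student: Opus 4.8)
The plan is to prove the two items in order, extracting along the way the standard intermediate fact that $\big(\|x^k-x\|\big)_{k\in\NN}$ is a convergent sequence of reals for every $x\in S$. I will tacitly assume $S\neq\emptyset$, since otherwise the quasi-Fej\'er condition in Definition \ref{def:tec1} is vacuous and even boundedness can fail.

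First I would fix an arbitrary $x\in S$ and take $k_0$ and $(\delta_k)_{k\in\NN}$ as in Definition \ref{def:tec1}, so that $a_k:=\|x^k-x\|^2$ satisfies $a_{k+1}\le a_k+\delta_k$ for all $k\ge k_0$. Iterating this inequality gives, for every $k\ge k_0$,
\[
a_k\le a_{k_0}+\sum_{j=k_0}^{k-1}\delta_j\le a_{k_0}+\sum_{j=k_0}^{\infty}\delta_j=:M<\infty .
\]
Hence $\|x^k-x\|\le\sqrt{M}$ for all $k\ge k_0$, and since the finitely many remaining iterates $x^0,\dots,x^{k_0-1}$ are trivially bounded, the whole sequence $(x^k)_{k\in\NN}$ is bounded, which is item (i). Next I would upgrade boundedness of $(a_k)$ to convergence by the usual tail-sum correction: setting $r_k:=\sum_{j=k}^{\infty}\delta_j$ (so $r_k\to 0$ and $\delta_k=r_k-r_{k+1}$) and $b_k:=a_k+r_k\ge 0$, one checks that for $k\ge k_0$,
\[
b_{k+1}=a_{k+1}+r_{k+1}\le a_k+\delta_k+r_{k+1}=a_k+r_k=b_k ,
\]
so $(b_k)_{k\ge k_0}$ is non-increasing and bounded below, hence converges to some $\ell\ge 0$; since $r_k\to 0$, it follows that $a_k=b_k-r_k\to\ell$, i.e. $\lim_{k\to\infty}\|x^k-x\|$ exists for every $x\in S$.

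For item (ii), let $x\in S$ be an accumulation point of $(x^k)_{k\in\NN}$ and pick a subsequence with $x^{k_i}\to x$, so that $\|x^{k_i}-x\|\to 0$. By the previous step the full sequence $\big(\|x^k-x\|\big)_{k\in\NN}$ converges, and a convergent sequence of reals having a subsequence tending to $0$ must itself tend to $0$; therefore $x^k\to x$, as claimed.

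The argument is entirely elementary, so the ``main obstacle'' is modest: the only step requiring a small idea rather than a routine estimate is passing from the almost-monotone inequality $a_{k+1}\le a_k+\delta_k$ to genuine convergence of $(a_k)$, which is resolved by subtracting off the summable tail $r_k$ to obtain a truly monotone sequence $b_k$. Everything else is telescoping of the defining inequality and the definition of an accumulation point.
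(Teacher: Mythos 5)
Your proof is correct and is essentially the standard argument behind the cited result (the paper itself gives no proof, deferring to Theorem 4.1 of the Iusem--Svaiter--Teboulle reference): telescoping the quasi-Fej\'er inequality for boundedness, the tail-sum correction $b_k=\|x^k-x\|^2+\sum_{j\ge k}\delta_j$ to get convergence of $\|x^k-x\|$, and then the subsequence argument for item (ii). Your remark that $S\neq\emptyset$ must be assumed is also the right caveat, and it is harmless here since the paper only applies the lemma with $S=T$ under Assumption 1.
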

We end with a brief introduction to quasiconvexity in the vectorial
framework.
\begin{dfn}\label{def5}
The vector function $F:\RR^n \rightarrow \RR^m$ is said to be
$K$--quasiconvex if for all $y\in \RR^m$ the level set
$L_{F}(y)=\{x\in \RR^n: F(x)\preceq_{K}y\}$ is convex.
\end{dfn}
The following characterization will be useful.
\begin{thm}\label{characterization}
Assume that $(\RR^m, \preceq_{K})$ is partially ordered. Then $F$ is
$K$--quasiconvex if and only if
 $\langle d,F(.) \rangle:\RR^n\rightarrow\RR$ is quasiconvex for every extreme direction $d\in K^{\star}.$
\end{thm}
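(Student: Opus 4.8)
The plan is to prove both implications by reducing the vectorial notion of $K$-quasiconvexity to the scalar one via level sets, using the fact (stated in the excerpt) that $-K=\{y:\varphi(y)\le 0\}$ and, more precisely, that $K^\star = \mathrm{co}(\mathrm{conv}(\mathrm{extd}(K^\star)))$, so membership in $K$ can be tested against extreme directions of $K^\star$.

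For the forward implication, assume $F$ is $K$-quasiconvex. Fix an extreme direction $d\in K^\star$ and a scalar $\lambda\in\RR$; I want to show $\{x: \langle d, F(x)\rangle \le \lambda\}$ is convex. The key observation is that $\langle d, F(x)\rangle \le \lambda$ does \emph{not} in general coincide with a single level set $L_F(y)$, so instead I would write this sublevel set as a union of level sets of $F$: namely $\{x:\langle d,F(x)\rangle\le\lambda\} = \bigcup_{y\,:\,\langle d,y\rangle\le\lambda} L_F(y)$. That union is a union of convex sets, which need not be convex, so this naive approach fails and the argument must be more careful. The right way is to take two points $x_1,x_2$ with $\langle d, F(x_i)\rangle \le \lambda$ and a convex combination $x_\tau$; since $(\RR^m,\preceq_K)$ is directed (from the excerpt, using $\mathrm{int}(K)\neq\emptyset$; if $\mathrm{int}(K)=\emptyset$ one passes to the relative interior or argues in the subspace spanned by $K$), pick $z$ with $F(x_1)\preceq_K z$ and $F(x_2)\preceq_K z$. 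Then $x_1,x_2\in L_F(z)$, which is convex, so $x_\tau\in L_F(z)$, i.e. $F(x_\tau)\preceq_K z$. Then $\langle d,F(x_\tau)\rangle \le \langle d,z\rangle$. This bounds $\langle d,F(x_\tau)\rangle$ by $\langle d,z\rangle$, but $\langle d,z\rangle$ may exceed $\lambda$, so we still are not done — the obstacle is to choose $z$ so that $\langle d,z\rangle \le \max(\langle d,F(x_1)\rangle,\langle d,F(x_2)\rangle)\le\lambda$. I would therefore argue contrapositively at the level of a supporting functional: if $\langle d,F(\cdot)\rangle$ were not quasiconvex, there would be $x_1,x_2,x_\tau$ with $\langle d,F(x_\tau)\rangle > \max_i\langle d,F(x_i)\rangle =: \mu$; then $x_1,x_2\in L_F(w)$ for any $w$ with $\langle d,w\rangle\le\mu$ lying above both $F(x_i)$ in the $K$-order — but such a $w$ need not exist either. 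The cleanest route is: use that $d$ is an extreme direction of $K^\star$ to get, via a separation/duality argument, a vector $w\in\RR^m$ with $F(x_1)\preceq_K w$, $F(x_2)\preceq_K w$ and $\langle d,w\rangle = \mu$ (this is where extremality of $d$ is essential — for a non-extreme $d$ one cannot keep $\langle d,w\rangle$ minimal while dominating both points). Then $x_\tau\in L_F(w)$ gives $\langle d,F(x_\tau)\rangle\le\mu$, the desired contradiction.

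For the converse, assume $\langle d,F(\cdot)\rangle$ is quasiconvex for every extreme direction $d\in K^\star$. Fix $y\in\RR^m$; I must show $L_F(y)=\{x:F(x)\preceq_K y\}$ is convex. Rewrite $F(x)\preceq_K y$ as $y-F(x)\in K$. By the polar duality $K = \{u : \langle d,u\rangle\ge 0 \text{ for all } d\in K^\star\}$ and, since $K^\star$ is generated by $\mathrm{conv}(\mathrm{extd}(K^\star))$, it suffices to test on extreme directions: $F(x)\preceq_K y \iff \langle d, y-F(x)\rangle \ge 0 \text{ for all } d\in\mathrm{extd}(K^\star) \iff \langle d,F(x)\rangle\le\langle d,y\rangle \text{ for all such } d$. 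Hence $L_F(y) = \bigcap_{d\in\mathrm{extd}(K^\star)} \{x : \langle d,F(x)\rangle \le \langle d,y\rangle\}$, an intersection of sublevel sets of the quasiconvex functions $\langle d,F(\cdot)\rangle$, each of which is convex; an arbitrary intersection of convex sets is convex, so $L_F(y)$ is convex and $F$ is $K$-quasiconvex. This direction is routine once the duality reduction to extreme directions is in place.

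The main obstacle is the forward direction: getting from "all $K$-level sets $L_F(y)$ are convex" down to convexity of each scalar sublevel set $\{\langle d,F(\cdot)\rangle\le\lambda\}$. The difficulty is that a scalar sublevel set is a \emph{union} (not an intersection) of the $L_F(y)$'s, and unions of convex sets are not convex, so one must exploit extremality of $d$ to select, for any two feasible points, a common $K$-upper bound $w$ whose $d$-value does not increase beyond $\max_i\langle d,F(x_i)\rangle$. I expect this to require a small lemma of the form: for an extreme direction $d$ of $K^\star$ and points $a,b\in\RR^m$, there exists $w$ with $a\preceq_K w$, $b\preceq_K w$ and $\langle d,w\rangle=\max(\langle d,a\rangle,\langle d,b\rangle)$ — provable by considering the face of $K$ exposed by $d$ or by a direct LP-duality argument — and everything else is bookkeeping with the properties of $\varphi$ and the polarity relations already recorded in the excerpt.
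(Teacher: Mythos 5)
Your overall architecture is the right one, and your ``converse'' direction (intersecting the convex sublevel sets $\{x:\langle d,F(x)\rangle\le\langle d,y\rangle\}$ over $d\in\mathrm{extd}(K^{\star})$, using $K=(K^{\star})^{\star}$ and the fact that $K^{\star}$ is generated by its extreme directions) is complete and correct. Note, however, that the paper does not prove this theorem at all: its ``proof'' consists of checking that $\mathrm{int}(K)\neq\emptyset$ makes the order directed and gives $K^{\star}=\mathrm{co}(\mathrm{conv}(\mathrm{extd}(K^{\star})))$, and then invoking Theorem 3.1 of Benoist--Borwein--Popovic. So what you are really attempting is a reconstruction of that cited result, and the entire difficulty is concentrated exactly where you leave a hole.

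The gap is the key lemma in your forward direction, and it is a genuine one for two reasons. First, as stated it is false: for an extreme direction $d$ of $K^{\star}$ there need not exist $w$ with $a\preceq_K w$, $b\preceq_K w$ and $\langle d,w\rangle=\max(\langle d,a\rangle,\langle d,b\rangle)$. Take $K=\{(x,y,z):z\ge\sqrt{x^2+y^2}\}$ (self-dual), $d=(1,0,1)$, $a=(0,0,0)$, $b=(-1,-1,1)$, so $\langle d,a\rangle=\langle d,b\rangle=0$; any $w$ with $\langle d,w\rangle=0$ and $w\in K$ lies on the ray $\RR_{+}(-1,0,1)$, and then $w-b=(1-t,1,t-1)$ is never in $K$ because $\sqrt{(1-t)^2+1}>t-1$. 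The face of $K$ exposed by $d$ --- precisely the route you propose --- is too small to dominate $b-a$, so that argument cannot be repaired as written. The correct statement is the infimum version, $\inf\{\langle d,w\rangle: a\preceq_K w,\ b\preceq_K w\}=\max(\langle d,a\rangle,\langle d,b\rangle)$ (in the example above the infimum is $0$ but is not attained), and with it your contradiction still closes: choose $w$ with $\langle d,w\rangle<\langle d,F(x_\tau)\rangle$, conclude $x_1,x_2\in L_F(w)$, hence $x_\tau\in L_F(w)$ and $\langle d,F(x_\tau)\rangle\le\langle d,w\rangle$, a contradiction. Second, even this corrected lemma is the actual mathematical content of the theorem --- it is where extremality of $d$ enters, via the fact that any decomposition $d=u'+v'$ with $u',v'\in K^{\star}$ forces $u'=sd$, $v'=(1-s)d$, combined with a duality formula for $\inf\{\langle d,\cdot\rangle\}$ over $(a+K)\cap(b+K)$ --- and ``a separation/duality argument'' is a placeholder, not a proof. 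Until that lemma is stated correctly and proved, the forward implication is not established.
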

\begin{proof}
As already remarked, \mbox{int}$(K)$ is a nonempty set, $K^{\star}$
is the conic hull of the closed convex hull of extd$(K^{\star})$ and
$(\RR^m, \preceq_{K})$ is directed. Combining these two facts, the
desired result follows from \cite[Theorem $3.1$]{quasiconvex}.
\end{proof}
\section{Inexact projected gradient algorithm}\label{secalgteste}
This part is devoted to present the method  proposed  in
\cite{Fukudainexact} and some properties of it. Fix $\hat{\beta}>0,$
$\delta\in (0,1),$ $\tau>1$ and $\sigma\in [0,1)$. The inexact
projected gradient method is defined as follows.

\vspace{0.035in}

\noindent {\bf Initialization:}  Take $x^0\in C$.

\vspace{0.025in}

\noindent  {\bf Iterative step:} Given $x^k$, compute a
$\sigma$-approximate direction $v^k$ at $x^k.$

\noindent  If $h_{x^k}(v^k)=0$, then stop. Otherwise compute
\begin{equation}\label{Armijo}
j(k):=\mbox{min}\left\{ j\in\mathbb{Z_{+}}: \ F(x^k +\tau^{-j}v^k
)\preceq_{K} F(x^k) + \delta\tau^{-j}J_{F}(x^k)v^k \right\}.
\end{equation}
\noindent  Set \ $t_{k}= \tau^{-j(k)}$ \ and \
$x^{k+1}=x^k+t_{k}v^k$.

\vspace{0.035in}

If $m=1$ and $\sigma=0$, the method becomes the classical exact
projected gradient method. In the inexact unconstrained case, we
retrieve the method introduced  in \cite{grana-svaiter}. The
following  holds.

\begin{propn}\label{boa definicao}
$h_{x^k}(v^k)=0,$  implies the stationarity of $x^k.$ Let $\delta
\in (0,1)$, $x^k\in C$ and let $v^k$ be a descent direction. Then,
there exits $\bar{\gamma}>0$ such that (\ref{Armijo}) holds for all
$\gamma\in [0, \bar{\gamma}]$, i.e.,
$$F(x^k +\gamma v^k )\preceq_{K} F(x^k) +
\delta\gamma J_{F}(x^k)v^k.
$$ So, the Armijo rule
is well defined.
\end{propn}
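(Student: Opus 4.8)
The statement has two parts, and I would handle them separately. For the first part — that $h_{x^k}(v^k)=0$ implies stationarity of $x^k$ — the plan is to unwind the definitions. Since $v^k$ is a $\sigma$-approximate direction, $h_{x^k}(v^k)\le (1-\sigma)\theta(x^k)$; since $\theta(x^k)=h_{x^k}(v(x^k))\le h_{x^k}(0)=0$ (as $0\in C-x^k$ and $\varphi(0)=0$), we get $0=h_{x^k}(v^k)\le(1-\sigma)\theta(x^k)\le 0$, forcing $\theta(x^k)=0$. By Lemma \ref{lemma2012-1}, $\theta(x^k)=0$ together with the uniqueness of the minimizer (so $v(x^k)=0$) gives that $x^k$ is stationary. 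I would make explicit that $\sigma<1$ is what lets us conclude $\theta(x^k)=0$ from the chain of inequalities.

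For the second part, I would fix the descent direction $v^k$ at $x^k$ and use a first-order Taylor expansion of $F$ along the segment. Write $F(x^k+\gamma v^k) = F(x^k) + \gamma J_F(x^k)v^k + o(\gamma)$ componentwise, so that
\begin{equation*}
F(x^k+\gamma v^k) - F(x^k) - \delta\gamma J_F(x^k)v^k = (1-\delta)\gamma J_F(x^k)v^k + o(\gamma).
\end{equation*}
Since $v^k$ is a descent direction and $x^k$ is not stationary, $J_F(x^k)v^k \in -\mathrm{int}(K)$, equivalently $\varphi(J_F(x^k)v^k)<0$. I would apply $\varphi$ to the displayed expression: using that $\varphi$ is positively homogeneous of degree $1$, subadditive, and Lipschitz with constant $1$, we get
\begin{equation*}
\varphi\bigl(F(x^k+\gamma v^k) - F(x^k) - \delta\gamma J_F(x^k)v^k\bigr) \le (1-\delta)\gamma\,\varphi(J_F(x^k)v^k) + \|o(\gamma)\|.
\end{equation*}
Because $(1-\delta)>0$ and $\varphi(J_F(x^k)v^k)<0$, the leading term is negative of order $\gamma$, while the error is $o(\gamma)$; hence there is $\bar\gamma>0$ so that the right-hand side is $\le 0$ for all $\gamma\in(0,\bar\gamma]$. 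Then $\varphi(\,\cdot\,)\le 0$ means the argument lies in $-K$, i.e. $F(x^k+\gamma v^k)\preceq_K F(x^k)+\delta\gamma J_F(x^k)v^k$, which is exactly the Armijo inequality; the case $\gamma=0$ is trivial. Consequently the minimum defining $j(k)$ in (\ref{Armijo}) is attained, so the rule is well defined.

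The main obstacle is the bookkeeping of the $o(\gamma)$ term: to pass from the scalar inequality on $\varphi$ to a genuine threshold $\bar\gamma$, I need the error to be uniformly $o(\gamma)$ across all components, which follows from $F\in\CC^1$ but should be stated carefully (e.g. via the mean value inequality applied to each coordinate of $F$, or by noting $\|F(x^k+\gamma v^k)-F(x^k)-\gamma J_F(x^k)v^k\| = o(\gamma)$ and then using the $1$-Lipschitz bound $|\varphi(a)-\varphi(b)|\le\|a-b\|$). A secondary point worth spelling out is why "descent direction at $x^k$" here should be read as the nonstationary case $J_F(x^k)v^k\prec_K 0$: if $v^k$ gave $J_F(x^k)v^k=0$ the algorithm would already have stopped, so the interesting case is the strict one, and that strictness ($\varphi(J_F(x^k)v^k)<0$) is precisely what makes the leading term dominate.
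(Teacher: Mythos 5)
Your proof is correct and, for the first assertion, follows exactly the paper's argument: from $0=h_{x^k}(v^k)\le(1-\sigma)\theta(x^k)$ and $\theta(x^k)\le 0$ one gets $\theta(x^k)=0$, hence stationarity via Lemma \ref{lemma2012-1}. For the second assertion the paper simply cites \cite[Proposition 1]{grana-iusem}, and your Taylor-expansion argument using the positive homogeneity, subadditivity and $1$-Lipschitz continuity of $\varphi$ is precisely the standard proof of that cited result, so the approaches coincide in substance.
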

\begin{proof}
For the first part note that  if $h_{x^k}(v^k)=0,$ then, by the
definition of $\sigma$-approximation, $\theta(x^k)\geq0$, but as
$\theta(x^k)\leq0,$ so $\theta(x^k)=0,$ concluding that $x^k$ is a
stationary point. On the other hand, if $x^k$ is stationary, then
$\theta(x^k)=0,$ and therefore $h_{x^k}(v^k)=0.$ The last part
follows from \cite[Proposition 1]{grana-iusem}. \end{proof}
\section{Convergence analysis}\label{CR}
In this section, we show the global convergence of the inexact
projected gradient method. If the method stops after a finite number
of iterations, it computes a stationary point as desired. So, we
will assume that $(x^k)_{k\in \NN}$, $(v^k)_{k\in \NN}$,
$(t_k)_{k\in \NN}$ are the infinite sequences generated by the
inexact projected gradient method. From \cite[Lemma
$3.6$]{Fukudainexact}, we recall that if $(F(x^k))_{k\in \NN}$ is
$K$--bounded, then
\begin{equation}\label{somatorio}
 \sum_{k=0}^{\infty}t_k|\langle \omega,J_{F}(x^k)v^k\rangle| < +\infty,\mbox{ for all }\omega \in\mbox{conv}(G).
\end{equation}
\begin{propn}\label{feasible}
Sequence $(x^k)_{k\in \NN}$ is feasible and $F(x^k)-F(x^{k+1})\in K$
for all $k$.
\end{propn}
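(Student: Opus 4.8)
The plan is to establish the two assertions separately: feasibility of $(x^k)_{k\in\NN}$ by induction on $k$, and the monotonicity relation $F(x^k)-F(x^{k+1})\in K$ by combining the Armijo acceptance test (\ref{Armijo}) with the sign of $\varphi(J_F(x^k)v^k)$.

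\emph{Feasibility.} I would argue by induction. The base case $x^0\in C$ is exactly the initialization. Suppose $x^k\in C$. By construction $v^k$ is a $\sigma$-approximate projected gradient direction at $x^k$, so in particular $v^k\in C-x^k$, i.e. $x^k+v^k\in C$. Since $j(k)\in\ZZ_+$ and $\tau>1$, the stepsize satisfies $t_k=\tau^{-j(k)}\in(0,1]$, hence $x^{k+1}=x^k+t_kv^k=(1-t_k)x^k+t_k(x^k+v^k)$ is a convex combination of the two points $x^k,\,x^k+v^k\in C$; convexity of $C$ then gives $x^{k+1}\in C$.

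\emph{Monotonicity.} The crucial preliminary fact is that $J_F(x^k)v^k\in -K$, equivalently $\varphi(J_F(x^k)v^k)\le 0$. Indeed, $0\in C-x^k$ yields $\theta(x^k)=h_{x^k}(v(x^k))\le h_{x^k}(0)=\hat{\beta}\varphi(0)=0$. Because $v^k$ is $\sigma$-approximate with $\sigma\in[0,1)$, we get $h_{x^k}(v^k)\le (1-\sigma)\theta(x^k)\le 0$; since $h_{x^k}(v^k)=\hat{\beta}\varphi(J_F(x^k)v^k)+\|v^k\|^2/2$ and $\hat{\beta}>0$, this forces $\varphi(J_F(x^k)v^k)\le -\|v^k\|^2/(2\hat{\beta})\le 0$, so $J_F(x^k)v^k\in -K$. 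Now the accepted step satisfies (\ref{Armijo}), that is $F(x^k)+\delta t_kJ_F(x^k)v^k-F(x^{k+1})\in K$. On the other hand $-\delta t_kJ_F(x^k)v^k\in K$, since $\delta t_k>0$, $-J_F(x^k)v^k\in K$ and $K$ is a cone. Adding these two members of $K$ and using that the convex cone $K$ is closed under addition, we obtain $F(x^k)-F(x^{k+1})\in K$.

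I do not expect a genuine obstacle here; the only points that need care are that the backtracking exponent $j(k)$ is nonnegative, so that $t_k\le 1$ and the convex-combination argument for feasibility is legitimate, and that one must pass through $\theta(x^k)\le 0$ to conclude $J_F(x^k)v^k\in -K$ (the excluded case $h_{x^k}(v^k)=0$ plays no role in this proposition).
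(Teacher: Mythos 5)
Your proof is correct and follows the same route as the paper, which disposes of the proposition in one line (feasibility from the definition of the method, $K$-decrease from the Armijo rule (\ref{Armijo})). Your write-up merely supplies the details the paper leaves implicit, in particular the useful observation that $\theta(x^k)\le 0$ forces $J_F(x^k)v^k\in -K$, which is exactly what is needed to pass from the Armijo inequality to $F(x^k)-F(x^{k+1})\in K$.
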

\begin{proof}
The feasibility is a consequence of the definition of the method and
the $K$--decreasing property follows from (\ref{Armijo}).
\end{proof} Under differentiability, the convergence is
obtained in \cite[Theorem $3.5$]{Fukudainexact}  as follows.
\begin{propn}\label{iusmau}
Every accumulation point, if any, of $(x^k)_{k\in \NN}$ is a
stationary point of Problem (\ref{Prel_P}).
\end{propn}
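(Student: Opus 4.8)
The plan is to show that any accumulation point $x^*$ of $(x^k)_{k\in\NN}$ satisfies the stationarity condition \eqref{stationarity}, arguing by contradiction. Suppose $x^{*}$ is not stationary, so $\theta(x^{*})<0$ and by Lemma \ref{lemma2012-1} the exact direction satisfies $v(x^{*})\neq 0$. Let $(x^{k_j})_{j}$ be a subsequence converging to $x^{*}$. The first order of business is to pin down the behaviour of the approximate directions $v^{k_j}$. Since each $v^k$ is a $\sigma$-approximate direction, $h_{x^k}(v^k)\le(1-\sigma)\theta(x^k)$, and from the definition $h_{x^k}(v^k)=\hat\beta\varphi(J_F(x^k)v^k)+\|v^k\|^2/2$; using that $\varphi$ is $1$-Lipschitz and positively homogeneous (hence $\varphi(J_F(x^k)v^k)\ge -\|J_F(x^k)\|\,\|v^k\|$) together with continuity of $\theta$ along the subsequence, one gets a uniform bound $\|v^{k_j}\|\le M$. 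So, passing to a further subsequence, we may assume $v^{k_j}\to \bar v$ for some $\bar v\in\RR^n$.

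Next I would combine the two main inputs. First, Proposition \ref{feasible} gives that $(F(x^k))_k$ is $\preceq_K$-decreasing; since the whole sequence lies in $C$ and $x^{*}$ is an accumulation point, continuity of $F$ forces $F(x^k)\to F(x^{*})$ and in particular $(F(x^k))_k$ is $K$-bounded (bounded below in the $\preceq_K$ sense by $F(x^{*})$, up to the usual argument that a $K$-decreasing sequence with an accumulation point is $K$-bounded). This activates the summability estimate \eqref{somatorio}: $\sum_k t_k|\langle\omega,J_F(x^k)v^k\rangle|<+\infty$ for every $\omega\in\mathrm{conv}(G)$. In particular $t_{k_j}\langle\omega,J_F(x^{k_j})v^{k_j}\rangle\to 0$. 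The standard dichotomy now is on $\liminf_j t_{k_j}$: either it is positive, in which case $\langle\omega,J_F(x^{*})\bar v\rangle=0$ for all $\omega\in\mathrm{conv}(G)$, forcing $\varphi(J_F(x^{*})\bar v)=0$ and hence (by the $h_{x}$ inequality passed to the limit) $\|\bar v\|^2/2\le(1-\sigma)\theta(x^{*})<0$, absurd; or $\liminf_j t_{k_j}=0$, and we pass to a subsubsequence with $t_{k_j}\to 0$.

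In the second branch, $t_{k_j}=\tau^{-j(k_j)}\to0$ means the Armijo step-size was strictly reduced at least once for large $j$, so $\tau t_{k_j}$ (one step ``before'' acceptance) violates \eqref{Armijo}: there is a generator $\omega^{k_j}\in G$ (equivalently a coordinate, via $\varphi$) with
\begin{equation*}
\varphi\!\left(F(x^{k_j}+\tau t_{k_j}v^{k_j})-F(x^{k_j})\right) > \delta\,\tau t_{k_j}\,\varphi\!\left(J_F(x^{k_j})v^{k_j}\right),
\end{equation*}
or more precisely the coordinatewise failure along some fixed generator after passing to a further subsequence. Dividing by $\tau t_{k_j}$, letting $j\to\infty$, using $x^{k_j}\to x^{*}$, $v^{k_j}\to\bar v$, $\tau t_{k_j}\to0$, the $\CC^1$ smoothness of $F$, and continuity of $\varphi$, the left side tends to $\varphi(J_F(x^{*})\bar v)$ and the inequality becomes $\varphi(J_F(x^{*})\bar v)\ge\delta\,\varphi(J_F(x^{*})\bar v)$, i.e. $(1-\delta)\varphi(J_F(x^{*})\bar v)\ge0$, so $\varphi(J_F(x^{*})\bar v)\ge0$. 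But passing the approximate-direction inequality $\hat\beta\varphi(J_F(x^{k_j})v^{k_j})+\|v^{k_j}\|^2/2\le(1-\sigma)\theta(x^{k_j})$ to the limit gives $\hat\beta\varphi(J_F(x^{*})\bar v)+\|\bar v\|^2/2\le(1-\sigma)\theta(x^{*})<0$, which contradicts $\varphi(J_F(x^{*})\bar v)\ge0$. Either branch yields a contradiction, so $x^{*}$ must be stationary.

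The main obstacle I anticipate is the bookkeeping in the second branch: making the ``one step before acceptance'' argument rigorous requires $j(k_j)\ge1$ for large $j$ (which follows from $t_{k_j}\to0$ since $t_{k_j}=1$ whenever $j(k_j)=0$), and one must be careful that the failure of \eqref{Armijo} at exponent $j(k_j)-1$ is a statement in the cone order which has to be converted — via $\varphi$ and the fact that $u\notin -K \iff \varphi(u)>0$ applied to $u=F(x^{k_j}+\tau t_{k_j}v^{k_j})-F(x^{k_j})-\delta\tau t_{k_j}J_F(x^{k_j})v^{k_j}$ — into the scalar inequality above before dividing and taking limits. The uniform boundedness of $(v^{k_j})$ and the extraction of a convergent subsequence are routine but must precede everything. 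Quasiconvexity of $F$ is not actually needed for this proposition; it enters only in the later global-convergence theorem.
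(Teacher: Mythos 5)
Your proof is essentially correct, but note that the paper itself does not prove this proposition: it simply quotes it from \cite[Theorem 3.5]{Fukudainexact}, so what you have written is a self-contained reconstruction of that external argument rather than a parallel to anything in the text. Your reconstruction follows the standard pattern (boundedness of $(v^{k_j})$ from the $\sigma$-approximation inequality, $K$-boundedness of $(F(x^k))$ from the accumulation point plus Proposition \ref{feasible} to activate \eqref{somatorio}, then the dichotomy on $\liminf_j t_{k_j}$ with an Armijo backtracking argument in the degenerate branch), and both branches do close correctly against $(1-\sigma)\theta(x^*)<0$. The one place to be careful is the displayed inequality in your second branch: the failure of \eqref{Armijo} at exponent $j(k_j)-1$ gives $\varphi(u_j)>0$ for $u_j=F(x^{k_j}+\tau t_{k_j}v^{k_j})-F(x^{k_j})-\delta\tau t_{k_j}J_F(x^{k_j})v^{k_j}$, but this does \emph{not} imply $\varphi(a_j)>\varphi(b_j)$ for the two pieces separately, since the maxima over $G$ may be attained at different generators (e.g.\ with $K=\RR^2_{+}$, $a=(1,0)$, $b=(2,-1)$ one has $a-b\notin -K$ yet $\varphi(a)<\varphi(b)$). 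The correct move is the one you flag yourself: pick $\omega_j\in G$ with $\langle u_j,\omega_j\rangle>0$, pass to $\omega_j\to\bar\omega\in G$ by compactness of $G$, divide the scalar inequality $\langle F(x^{k_j}+\tau t_{k_j}v^{k_j})-F(x^{k_j}),\omega_j\rangle>\delta\tau t_{k_j}\langle J_F(x^{k_j})v^{k_j},\omega_j\rangle$ by $\tau t_{k_j}$ and take limits to get $(1-\delta)\langle J_F(x^*)\bar v,\bar\omega\rangle\ge 0$, hence $\varphi(J_F(x^*)\bar v)\ge\langle J_F(x^*)\bar v,\bar\omega\rangle\ge 0$. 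With that repair the argument is complete, and your observation that quasiconvexity plays no role here is consistent with where the proposition sits in the paper (before Assumptions 1--2 and the quasiconvexity hypothesis are introduced).
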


\noindent In what follows we present the main novelty of this paper.
For the convergence of the method we need the following hypotheses.
\\{\bf Assumption 1.} $T\neq\emptyset$, where $
T:=\left\{ x\in C\colon F(x)\preceq_{K} F(x^k), \; k=0,1,
\ldots,\right\}. $
\\{\bf Assumption 2.} Each $v^k$ of the sequence $(v^k)_{k\in \NN}$ is  scalarization compatible,
i.e., exists a sequence $(\omega^k)_{k\in \NN}\subset\mbox{conv}
(G)$ such that
\[
v^k=P_{C-x^k}(-\hat{\beta}{J_{F}(x^k)}^{T}\omega^k),\qquad
k=0,1,\ldots.
\]
The convergence of several methods for solving vector optimization
problems is usually obtained under Assumption 1; see \cite{yunier,
Bento2012-2,
Bento2012-1,fliege-svaiter,fu-mauricio,grana-iusem,grana-svaiter}.
Although the existence of a weakly efficient solution does not imply
that $T$ is nonempty, it is closely related with the completeness of
the Im$(F)$, which ensures the existence of efficient points; see
\cite{luc}. Moreover, if  the sequence $(x^k)_{k\in \NN}$ has an
accumulation point, then $T$ is nonempty; see
\cite{Bento2012-2,Bento2012-1}.

Assumption $2$ holds if  $v^k$ is the exact gradient projected
direction at $x^k$. It was also used in \cite{Fukudainexact} for
proving the full convergence of the sequence generated by the method
in the case that $F$ is $K$--convex. \\From now on, we will assume
that Assumptions $1$-$2$ hold. We start with the following result

\begin{lem}\label{lemma:tec1}
For each $\hat{x}\in T$ and $k\in \mathbb{N}$, it holds that $$
\langle v^k,\hat{x} - x^k \rangle \geq  \hat{\beta}\langle
J_{F}(x^k)^{T}\omega^k,v^k\rangle + \|v^k\|^2. $$
\end{lem}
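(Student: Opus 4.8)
The plan is to combine the variational (obtuse-angle) characterization of the projection that defines $v^k$ with the first-order property of differentiable quasiconvex functions, using Assumption~2 to view $v^k$ as an exact scalarized projected gradient step. First I would use Assumption~2 to write $v^k=P_{C-x^k}(-\hat{\beta}J_{F}(x^k)^{T}\omega^k)$ for some $\omega^k\in\mbox{conv}(G)\subseteq K^{\star}$. Since $C-x^k$ is closed and convex and $\hat{x}-x^k\in C-x^k$ (because $\hat{x}\in T\subseteq C$), the characterization of the metric projection gives
\[
\langle -\hat{\beta}J_{F}(x^k)^{T}\omega^k - v^k,\ (\hat{x}-x^k)-v^k\rangle\le 0,
\]
which, after expanding and rearranging, is equivalent to
\[
\langle v^k,\hat{x}-x^k\rangle\ \ge\ \hat{\beta}\langle J_{F}(x^k)^{T}\omega^k,v^k\rangle+\|v^k\|^2-\hat{\beta}\langle J_{F}(x^k)^{T}\omega^k,\hat{x}-x^k\rangle .
\]
So the claim reduces to showing $\langle J_{F}(x^k)^{T}\omega^k,\hat{x}-x^k\rangle\le 0$, since then the last term is nonnegative and may be dropped.

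To establish $\langle J_{F}(x^k)^{T}\omega^k,\hat{x}-x^k\rangle\le 0$ I would invoke $K$--quasiconvexity through Theorem~\ref{characterization}. Write $\omega^k$ as a finite nonnegative combination $\omega^k=\sum_{j}\mu_j d_j$ with $\mu_j\ge 0$ and $d_j\in\mbox{extd}(K^{\star})$; this is possible because $K^{\star}=\mbox{co}(\mbox{conv}(\mbox{extd}(K^{\star})))$ (see \cite[Remark 3.2]{quasiconvex}), so $\omega^k$ is a nonnegative multiple of a finite convex combination of extreme directions. Since $\hat{x}\in T$ we have $F(x^k)-F(\hat{x})\in K$, hence $\langle d_j,F(\hat{x})\rangle\le\langle d_j,F(x^k)\rangle$ for every $j$. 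By Theorem~\ref{characterization}, each scalar function $g_j:=\langle d_j,F(\cdot)\rangle$ is quasiconvex; it is $\CC^1$ with $\nabla g_j(x^k)=J_{F}(x^k)^{T}d_j$, and the standard first-order inequality for differentiable quasiconvex functions — if $g_j(y)\le g_j(x)$ then $\langle\nabla g_j(x),y-x\rangle\le 0$, obtained by differentiating $t\mapsto g_j(x+t(y-x))\le g_j(x)$ at $t=0^{+}$ via quasiconvexity — yields $\langle J_{F}(x^k)^{T}d_j,\hat{x}-x^k\rangle\le 0$. Multiplying by $\mu_j\ge 0$ and summing over $j$ gives $\langle J_{F}(x^k)^{T}\omega^k,\hat{x}-x^k\rangle\le 0$, and substituting this into the displayed inequality above finishes the proof.

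The point that needs care is precisely the passage from $K$--quasiconvexity to the scalar first-order inequalities: one cannot scalarize directly with $\omega^k$, because a convex combination of quasiconvex functions need not be quasiconvex (as noted in the introduction), so it is essential to decompose $\omega^k$ into \emph{extreme} directions of $K^{\star}$, where Theorem~\ref{characterization} applies, and to recombine only the already-proved first-order inequalities $\langle J_{F}(x^k)^{T}d_j,\hat{x}-x^k\rangle\le 0$ with nonnegative weights — never the functions themselves. Everything else (the projection inequality and the algebraic rearrangement) is routine.
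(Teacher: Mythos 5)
Your proof is correct and follows essentially the same route as the paper's: the variational characterization of the projection applied at $v=\hat{x}-x^k$, followed by decomposing $\omega^k$ into nonnegative combinations of extreme directions of $K^{\star}$ and applying the first-order inequality for the differentiable quasiconvex scalarizations $\langle d_j,F(\cdot)\rangle$ to drop the term $-\hat{\beta}\langle J_{F}(x^k)^{T}\omega^k,\hat{x}-x^k\rangle$. Your added justification of the first-order quasiconvexity inequality (differentiating along the segment) is a detail the paper leaves implicit, but the argument is the same.
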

\begin{proof}
Take $k\in \mathbb{N}$ and $\hat{x}\in T$. As $v^k$ is
$s$-compatible at $x^k$, then
$v^{k}=P_{C-x^k}(-\hat{\beta}J_{F}(x^k)^{T}\omega^k)$, for some
$\omega^k \in \mbox{conv}(G)$. As $v^k$ is a projection, $
 \langle-\hat{\beta}J_{F}(x^k)^{T}\omega^k - v^k, v - v^k\rangle \leq 0,$ for all $v\in C - x^k.
$ In particular, for $v = \hat{x} - x^k,$ we obtain $
\langle-\hat{\beta}J_{F}(x^k)^{T}\omega^k - v^k, \hat{x} - x^k -
v^k\rangle \leq 0 .$ So, from the last inequality, we get
\begin{equation} \label{eq:tec1}
\langle v^k,\hat{x} - x^k \rangle \geq -\hat{\beta}\langle
J_{F}(x^k)^{T}\omega^k,\hat{x} - x^k \rangle  + \hat{\beta}\langle
J_{F}(x^k)^{T}\omega^k,v^k\rangle + \|v^k\|^2.
\end{equation}
Since $F$ is $K$--quasiconvex, by Theorem \ref{characterization},
for each $d\in \mbox{extd}(K^{\star})$,  $\langle d,F \rangle: \RR^n
\rightarrow \RR$ is a quasiconvex function. As
co(conv(extd$(K^{\star})$)) $=K^{\star}$ and $\omega^k \in
\mbox{conv}(G)\subset K^{\star},$  $\omega^k
=\sum_{\ell=1}^p\gamma_{\ell}^{k}d_{\ell},$ where
$\gamma_{\ell}^{k}\in \mathbb R_+ $ and $d_{\ell} \in\mbox{extd}(K
^{\star}),$ for all $1\leq \ell \leq p$. Therefore,
$$
\hat{\beta}\langle J_{F}(x^k)^{T}\omega^k,\hat{x} - x^k\rangle =
\hat{\beta}\langle
J_{F}(x^k)^{T}\sum_{\ell=1}^p\gamma_{\ell}^{k}d_{\ell},\hat{x} -
x^k\rangle = \hat{\beta}\sum_{\ell=1}^p\gamma_{\ell}^{k}\langle
J_{F}(x^k)^{T}d_{\ell},\hat{x} - x^k\rangle. $$ As $ \hat{x}\in T$,
we have $F(x^k) - F(\hat{x}) \in K .$ So, $\langle d_\ell ,F(x^k) -
F(\hat{x})\rangle \geq 0$, for all $d_\ell \in
\mbox{extd}(K^{\star})$. But
 as $\langle d_\ell ,F \rangle$ is a real-valued, quasiconvex differentiable function, it follows that
 $
 \langle J_{F}(x^k)^{T} d_{\ell},\hat{x} - x^k \rangle \leq0.
 $
This implies that $ \hat{\beta}\langle
J_{F}(x^k)^{T}\omega^k,\hat{x} - x^k\rangle \leq 0. $ Now, the
result follows from combining of the last inequality with
(\ref{eq:tec1}). \end{proof} Next lemma  presents the quasi-Fej\'er
convergence.
\begin{lem}\label{Fejconv2}
Suppose that $F$ is $K$--quasiconvex. Then, the sequence
$(x^k)_{k\in \NN}$ is quasi-Fej\'er convergent to the set $T$.
\end{lem}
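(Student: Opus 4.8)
The plan is to show directly the quasi-Fej\'er inequality $\|x^{k+1}-\hat x\|^2\le\|x^k-\hat x\|^2+\delta_k$ for each $\hat x\in T$, with a summable error sequence $(\delta_k)$, and then simply invoke Definition \ref{def:tec1}. First I would expand $\|x^{k+1}-\hat x\|^2=\|x^k+t_kv^k-\hat x\|^2=\|x^k-\hat x\|^2-2t_k\langle v^k,\hat x-x^k\rangle+t_k^2\|v^k\|^2$. The middle term is where Lemma \ref{lemma:tec1} enters: since $\langle v^k,\hat x-x^k\rangle\ge\hat\beta\langle J_F(x^k)^T\omega^k,v^k\rangle+\|v^k\|^2$, we get $-2t_k\langle v^k,\hat x-x^k\rangle\le-2t_k\hat\beta\langle J_F(x^k)^T\omega^k,v^k\rangle-2t_k\|v^k\|^2$, so that
\begin{equation}\label{eq:plan1}
\|x^{k+1}-\hat x\|^2\le\|x^k-\hat x\|^2-2\hat\beta t_k\langle\omega^k,J_F(x^k)v^k\rangle-2t_k\|v^k\|^2+t_k^2\|v^k\|^2.
\end{equation}

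Next I need to control the trailing terms. Since $t_k=\tau^{-j(k)}\le1$ (because $j(k)\in\mathbb{Z}_+$ and $\tau>1$), we have $t_k^2\|v^k\|^2\le t_k\|v^k\|^2$, hence $-2t_k\|v^k\|^2+t_k^2\|v^k\|^2\le-t_k\|v^k\|^2\le0$. So the only term that could be positive is $-2\hat\beta t_k\langle\omega^k,J_F(x^k)v^k\rangle$, and in absolute value it is bounded by $2\hat\beta\,t_k|\langle\omega^k,J_F(x^k)v^k\rangle|$. Thus $\|x^{k+1}-\hat x\|^2\le\|x^k-\hat x\|^2+\delta_k$ with $\delta_k:=2\hat\beta\,t_k|\langle\omega^k,J_F(x^k)v^k\rangle|$. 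It remains to check that $\sum_k\delta_k<\infty$: this is exactly \eqref{somatorio} applied with $\omega=\omega^k$ — but note \eqref{somatorio} holds for each fixed $\omega\in\mathrm{conv}(G)$, whereas here $\omega^k$ varies with $k$, so I cannot apply it term-by-term. Instead I would use \eqref{somatorio} for finitely many fixed generators: write $\omega^k=\sum_{\ell=1}^p\gamma_\ell^k d_\ell$ with $d_\ell\in\mathrm{extd}(K^\star)$ as in the proof of Lemma \ref{lemma:tec1}, observe $\langle\omega^k,J_F(x^k)v^k\rangle=\sum_\ell\gamma_\ell^k\langle d_\ell,J_F(x^k)v^k\rangle$, and bound; more simply, since $\mathrm{conv}(G)$ is compact, I would invoke \cite[Lemma 3.6]{Fukudainexact} in the form that already yields $\sum_k t_k\sup_{\omega\in\mathrm{conv}(G)}|\langle\omega,J_F(x^k)v^k\rangle|<\infty$, or note that the summability in \eqref{somatorio} is uniform over the compact set $G$.

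The one genuine prerequisite is that $(F(x^k))_{k\in\NN}$ is $K$-bounded, which is needed to invoke \eqref{somatorio}. This follows from Assumption 1 and Proposition \ref{feasible}: for any $\hat x\in T$ we have $F(\hat x)\preceq_K F(x^k)$ for all $k$, so $z:=F(\hat x)$ is a lower bound, i.e. $(F(x^k))_{k\in\NN}$ is $K$-bounded. I would state this explicitly at the start of the proof. The main obstacle I anticipate is precisely the $\omega^k$-varies-with-$k$ issue in summing the error: it must be handled either by the compactness/uniformity of $G$ or by decomposing each $\omega^k$ over the finitely many extreme directions and applying \eqref{somatorio} to each $d_\ell$ separately (the coefficients $\gamma_\ell^k$ are bounded because $\mathrm{conv}(G)$ is bounded). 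Everything else — the algebraic expansion, the estimate $t_k\le1$, the sign of $-t_k\|v^k\|^2$ — is routine.

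\begin{proof}
Fix $\hat x\in T$. By Proposition \ref{feasible} and Assumption 1, $F(\hat x)\preceq_K F(x^k)$ for all $k$, so $(F(x^k))_{k\in\NN}$ is $K$-bounded and \eqref{somatorio} applies. Using $x^{k+1}=x^k+t_kv^k$,
\begin{equation}\label{eq:fejexp}
\|x^{k+1}-\hat x\|^2=\|x^k-\hat x\|^2-2t_k\langle v^k,\hat x-x^k\rangle+t_k^2\|v^k\|^2.
\end{equation}
By Lemma \ref{lemma:tec1}, $\langle v^k,\hat x-x^k\rangle\ge\hat\beta\langle J_F(x^k)^T\omega^k,v^k\rangle+\|v^k\|^2$, and since $t_k=\tau^{-j(k)}\le1$ we have $t_k^2\|v^k\|^2\le t_k\|v^k\|^2$. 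Substituting into \eqref{eq:fejexp},
$$
\|x^{k+1}-\hat x\|^2\le\|x^k-\hat x\|^2-2\hat\beta t_k\langle\omega^k,J_F(x^k)v^k\rangle-t_k\|v^k\|^2\le\|x^k-\hat x\|^2+\delta_k,
$$
where $\delta_k:=2\hat\beta\,t_k|\langle\omega^k,J_F(x^k)v^k\rangle|\ge0$. Writing $\omega^k=\sum_{\ell=1}^p\gamma_\ell^k d_\ell$ with $d_\ell\in\mathrm{extd}(K^\star)$ and $\gamma_\ell^k\ge0$, the coefficients $\gamma_\ell^k$ are bounded (as $\mathrm{conv}(G)$ is compact), so $\delta_k\le 2\hat\beta\sum_{\ell=1}^p\gamma_\ell^k\,t_k|\langle d_\ell,J_F(x^k)v^k\rangle|$, and \eqref{somatorio} applied to each $d_\ell\in\mathrm{conv}(G)$ gives $\sum_{k=0}^\infty\delta_k<\infty$. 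Hence $(x^k)_{k\in\NN}$ is quasi-Fej\'er convergent to $T$.
\end{proof}
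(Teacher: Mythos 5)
Your argument coincides with the paper's up to the inequality $\|x^{k+1}-\hat x\|^2\le\|x^k-\hat x\|^2+2\hat\beta\, t_k|\langle\omega^k,J_F(x^k)v^k\rangle|$: the expansion of $\|x^{k+1}-\hat x\|^2$, the use of Lemma \ref{lemma:tec1}, and the bound $t_k^2\|v^k\|^2\le t_k\|v^k\|^2$ are exactly the paper's steps, and you are right (and more explicit than the paper) that the $K$--boundedness of $(F(x^k))_{k\in\NN}$ needed for \eqref{somatorio} comes for free from Assumption 1, since $F(\hat x)\preceq_K F(x^k)$ for all $k$.

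The gap is in the final summability step --- precisely the one you flagged as the main obstacle. You resolve the ``$\omega^k$ varies with $k$'' problem by writing $\omega^k=\sum_{\ell=1}^p\gamma_\ell^k d_\ell$ with $d_\ell\in\mbox{extd}(K^{\star})$ and applying \eqref{somatorio} to each $d_\ell$. That only works if the $d_\ell$ form a single finite set independent of $k$. For a non-polyhedral cone (e.g.\ the second-order cone) $K^{\star}$ has infinitely many extreme directions, and the Carath\'eodory-type representation of $\omega^k$ uses extreme directions that depend on $k$; you would then be invoking \eqref{somatorio} for infinitely many different vectors with no control on how the corresponding tails accumulate. (A smaller issue: an extreme direction of $K^{\star}$ need not lie in $\mbox{conv}(G)$, so \eqref{somatorio} does not literally apply to $d_\ell$ without rescaling; and the boundedness of the conic coefficients $\gamma_\ell^k$ uses pointedness of the cone, not just compactness of $\mbox{conv}(G)$.) The paper closes this step with a different decomposition: since $K$ is pointed and closed, $\mbox{int}(K^{\star})\neq\emptyset$, so one may fix a basis $\{\widetilde\omega^1,\dots,\widetilde\omega^m\}$ of $\RR^m$ contained in $\mbox{conv}(G)$ and write $\omega^k=\sum_{i=1}^m\eta_i^k\widetilde\omega^i$ with $|\eta_i^k|\le L$ uniformly in $k$ (coordinates with respect to a fixed basis are bounded on the compact set $\mbox{conv}(G)$). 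Then $\delta_k\le 2\hat\beta L\,t_k\sum_{i=1}^m|\langle\widetilde\omega^i,J_F(x^k)v^k\rangle|$, and \eqref{somatorio} is applied only to the $m$ fixed vectors $\widetilde\omega^i$. This is exactly the ``uniformity over the compact set $G$'' you mention as an alternative, but it needs the basis argument to be justified; as written, your committed proof does not establish $\sum_k\delta_k<\infty$ for general $K$.
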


\begin{proof}
Since $T$ is nonempty, take $\hat{x}\in T$ and fix $k\in
\mathbb{N}$. Using the definition of $x^{k+1}$, after some algebraic
work, we are lead to
\begin{equation}\label{eq:tec3}
\|x^{k+1} - \hat{x}\|^2=\|x^{k} - \hat{x}\|^2 +  \|x^{k+1}
-x^{k}\|^2 - 2t_k\langle v^k,\hat{x} - x^{k}\rangle.
\end{equation}
Using Lemma \ref{lemma:tec1}, recall that $t_{k}\in(0,1)$, we get
\begin{equation}\label{eq:tec2}
\|x^{k+1} - x^k\|^2 - 2t_{k}\langle v^k,\hat{x} - x^k\rangle \leq
t_{k}\|v^k\|^2 -2t_{k}(\hat{\beta}\langle
J_{F}(x^k)^{T}\omega^k,v^k\rangle + \|v^k\|^2).
\end{equation}
On the other hand, $ t_{k}\|v^k\|^2 -2t_{k}(\hat{\beta}\langle
J_{F}(x^k)^{T}\omega^k,v^k\rangle + \|v^k\|^2) \leq
-2t_{k}\hat{\beta}\langle J_{F}(x^k)^{T}\omega^k,v^k\rangle, $
Recalling  that $\alpha\leq |\alpha|$,   from (\ref{eq:tec2}), we
obtain $ \|x^{k+1} - x^k\|^2 - 2t_{k}\langle v^k,\hat{x} -
x^k\rangle \leq  2t_{k} |\hat{\beta}\langle
J_{F}(x^k)^{T}\omega^k,v^k\rangle|. $ Combining last inequality with
(\ref{eq:tec3}), we get
\begin{equation}\label{eq:tec4}
\|x^{k+1} -\hat{x}\|^2\leq \|x^{k} -\hat{x}\|^2 + 2t_{k}
\hat{\beta}|\langle J_{F}(x^k)^{T}\omega^k,v^k\rangle| .
\end{equation}
Since $K$ is a pointed, closed and convex cone,
$\mbox{int}(K^{\star})$ is a nonempty set; see \cite[Propositions
2.1.4, 2.1.7(i)]{Japonesa}. Therefore, $K^{\star}$ contains a basis
of $\RR^{m}$. Without loss of generality, we assume that the basis
\{$ \widetilde{\omega}^1,...,\widetilde{\omega}^m\} \subset
\mbox{conv}(G)$. Thus, for each $k$, there exist $\eta^{k}_{i} \in
\RR$, $i=1,\ldots,m$, such that $ \omega^{k}=
\sum_{i=1}^m\eta_{i}^{k}\widetilde{\omega}^i. $ By the compactness
of conv($G$),  there exists $L >0$, such that
 $|\eta_{i}^{k}|\leq L$ for all $i$ and $k$. Thus, inequality (\ref{eq:tec4}) becomes
$ \|x^{k+1} -\hat{x}\|^2\leq \|x^{k} -\hat{x}\|^2 +2t_{k}
\hat{\beta}L\sum_{i=1}^m |\langle\widetilde{\omega}^i,
J_{F}(x^k)v^k\rangle|.$

Defining $\delta_{k} := 2t_{k}
\hat{\beta}L\sum_{i=1}^m|\langle\widetilde{\omega}^i, J_{F}(x^k)v^k
\rangle|,$ it follows that $\delta_{k}> 0$. Since $(F(x^k))_{k\in
\NN}$ is $K$--bounded and using (\ref{somatorio}), we have
$\sum_{k=0}^{\infty}\delta_{k}<\infty$. Therefore, since $\hat{x}$
is an arbitrary element of $T$, the desired result follows from
Definition \ref{def:tec1}. \end{proof} Next theorem establishes a
sufficient condition for the convergence of the sequence
$(x^k)_{k\in \NN}$.

\begin{thm}\label{teo-1}
Assume that $F$ is a $K$--quasiconvex function. Then, $(x^k)_{k\in
\NN}$ converges to a stationary point.
\end{thm}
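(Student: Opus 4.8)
The plan is to combine the quasi-Fej\'er convergence established in Lemma~\ref{Fejconv2} with Lemma~\ref{fejer} and the characterization of accumulation points as stationary points in Proposition~\ref{iusmau}. First I would invoke Lemma~\ref{Fejconv2}: since $F$ is $K$--quasiconvex and Assumptions $1$--$2$ are in force, the sequence $(x^k)_{k\in\NN}$ is quasi-Fej\'er convergent to $T$. By Lemma~\ref{fejer}(i), this immediately gives boundedness of $(x^k)_{k\in\NN}$, so the sequence has at least one accumulation point, say $\bar x$.

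Next I would show that this accumulation point lies in the target set $T$, which is exactly what is needed to apply Lemma~\ref{fejer}(ii). Let $(x^{k_j})_{j\in\NN}$ be a subsequence converging to $\bar x$. Since $C$ is closed, $\bar x\in C$. By Proposition~\ref{feasible}, the sequence $(F(x^k))_{k\in\NN}$ is $K$--decreasing: $F(x^k)-F(x^{k+1})\in K$ for all $k$, hence $F(x^{k'})\preceq_K F(x^k)$ whenever $k'\geq k$. Fixing $k$ and letting $j\to\infty$ along indices $k_j\geq k$, continuity of $F$ together with closedness of $K$ yields $F(\bar x)-F(x^k)=\lim_{j}\big(F(x^{k_j})-F(x^k)\big)\in -K$, i.e.\ $F(\bar x)\preceq_K F(x^k)$. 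Since $k$ was arbitrary, $\bar x\in T$.

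Finally, having an accumulation point $\bar x$ that belongs to $T$, Lemma~\ref{fejer}(ii) implies that the whole sequence $(x^k)_{k\in\NN}$ converges to $\bar x$. It remains to identify $\bar x$ as a stationary point: since $\bar x$ is an accumulation point of $(x^k)_{k\in\NN}$, Proposition~\ref{iusmau} guarantees that $\bar x$ is a stationary point of Problem~(\ref{Prel_P}). This completes the argument.

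The only mildly delicate point is the verification that the accumulation point lies in $T$; everything else is a direct citation of the previously established lemmas. The argument for $\bar x\in T$ hinges on the $K$--decreasing property from Proposition~\ref{feasible} and on $-K$ being closed (so that the limit of the differences stays in $-K$), which is available because $K$ is closed by hypothesis. I do not anticipate any real obstacle, since all the machinery---quasi-Fej\'er convergence, the Fej\'er lemma, and the accumulation-point-implies-stationary result---has been assembled precisely for this conclusion.
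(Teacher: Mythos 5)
Your proposal is correct and follows essentially the same route as the paper's proof: quasi-Fej\'er convergence to $T$ (Lemma~\ref{Fejconv2}) gives boundedness and an accumulation point, the $K$--decreasing property from Proposition~\ref{feasible} together with continuity of $F$ and closedness of $K$ places that accumulation point in $T$, and then Lemma~\ref{fejer}(ii) and Proposition~\ref{iusmau} finish the argument. Your verification that the accumulation point lies in $T$ is, if anything, slightly more carefully spelled out than the paper's.
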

\begin{proof}
Since $F$ is $K$--quasiconvex, from Lemma \ref{Fejconv2} it follows
that  $ (x^k)_{k\in \NN}$ is quasi-Fej\'er convergent and, hence,
bounded; see Lemma \ref{fejer}(i). Therefore $(x^k)_{k\in \NN}$ has
at least one accumulation point, say $x^*$. From Proposition
\ref{iusmau}, $x^*$ is a stationary point. Moreover, since $C$ is
closed and the sequence is feasible, $x^*\in C$.

We proceed to prove $x^*\in T$. Since $F$ is continuous
$(F(x^k))_{k\in \NN}$ has $F(x^*)$ as accumulation point. By
Proposition~\ref{feasible}, $(F(x^k))_{k\in \NN}$ is a
$K$--decreasing sequence. Hence, the whole sequence $(F(x^k))_{k\in
\NN}$ converges to $F(x^*)$  and holds $ F(x^*)\preceq_K F(x^k)$
with $k\in \NN$, which implies that $x^*\in T$. Therefore, the
desired result follows from Lemma \ref{fejer}(ii) and
Proposition~\ref{iusmau}. \end{proof} This theorem extends the full
convergence obtained under $K$--convexity in \cite{Fukudainexact} to
the $K$--quasiconvex case. This class is actually  larger than
$K$--convex problems as next example shows.
\begin{exampleee}
Let $F: \RR\to\RR^2$, $F(t)=(4t^2,t^4-4t^2+2)$, and
$K=co(conv(\{(1,0),(1,1)\}))$. The function $F$  is not $K$--convex
because $ (F(0) +F(1))/2 - F(1/2)= (1,-9/16)\notin K, $ but, as
$\langle(1,0),F(t)\rangle = 4t^2$  and $\langle(1,1), F(t)\rangle =
t^4 + 2, $ are quasiconvex, by Theorem \ref{characterization}, $F$
is $K$--quasiconvex.
\end{exampleee}

\section{Final Remarks}\label{remarksss}

In this paper we considered the inexact projected gradient method
presented in \cite{Fukudainexact}. We explored strongly the
structure of problem \eqref{Prel_P}, mainly the quasi–convexity of
the function $F$, and obtained that the sequence generated by the
approach converges to a stationary point. So,  the method will be
successful for a class which is larger than the cases studied so
far.

 Future research is focused into two directions:  the practical
implementation of the method and its generalization to other cases.
In particular, we are looking for the convergence of a subgradient
method for solving the non-differentiable and $K$--quasiconvex
problem without to use scalarizations.

Recently were published in \cite{yunier-luis,yunier1,Bento2013-2}
the subgradient approaches for solving vectorial and feasibility
problems; see also \cite{orizon-xavier} using strongly
scalarizations technics. We also pretend to extend these methods to
the variable ordering case; see, for instance, \cite{yunier-gema}.


%


\subsection*{Acknowledgment} The authors were partially supported by
CNPq, by projects PROCAD-nf - UFG/UnB/IMPA, CAPES-MES-CUBA 226/2012
and Universal-CNPq.

{\footnotesize

\bibliographystyle{plain}

}

\begin{thebibliography}{1}

\bibitem{yunier-gema} J.Y. Bello Cruz,  G. Bouza Allende,  A steepest descent-like method for variable order vector optimization problems,
J. Optim. Theory Appl. (2013) doi: 10.1007/s10957-013-0308-6

\bibitem{yunier-luis} J.Y. Bello Cruz and L.R. Lucambio P\'erez, A subgradient-like algorithm for solving vector convex inequalities,
J. Optim. Theory Appl. (2013) doi: 10.1007/s10957-013-0300-1

\bibitem{yunier1} J.Y. Bello Cruz,  A subgradient method for vector optimization
problems, {SIAM J. Optim.} 23 2169--2182 (2013).

\bibitem{yunier} J.Y. Bello Cruz,  L.R. Lucambio P\'erez and  J.G. Melo, Convergence of the projected gradient method for quasiconvex multiobjective
optimization, Nonlinear Anal. 74 5268--5273 (2011).

\bibitem{quasiconvex} J. Benoist, J.M. Borwein and N.Popovic, A characterization of quasiconvex vector-valued functions,
Proc. Amer. Math. Soc. 131 1109--1113 (2003).

\bibitem{Bento2012-2} G.C. Bento, J.X. Cruz Neto, P. R. Oliveira and A. Soubeyran, The self regulation problem as an inexact
steepest descent method for multicriteria optimization, to appear in
European J. Oper. Res. (2014).

\bibitem{Bento2013-1} G.C. Bento,  J.X. Cruz Neto and P.S.M. Santos, An Inexact Steepest Descent Method for Multicriteria Optimization on Riemannian Manifolds,
J. Optim. Theory Appl. 159 108-124 (2013).

\bibitem{Bento2013-2} G.C. Bento and J.X. Cruz Neto, A Subgradient Method for Multiobjective Optimization on Riemannian Manifolds,
J. Optim. Theory Appl. 159 125-137 (2013).

\bibitem{Bento2012-1}  G.C. Bento,  O.P. Ferreira and P.R. Oliveira, Unconstrained steepest descent method for multicriteria
optimization on Riemannian manifolds,  J. Optim. Theory Appl. 154
88--107 (2012).

\bibitem{orizon-xavier}  J.X. Cruz Neto, G. J. P. da Silva,  O.P. Ferreira,  and  J.O. Lopes, A
Subgradient Method for Multiobjective Optimization, {Comput. Optim.
Appl.} 54 461--472 (2013).

\bibitem{fliege-svaiter} J. Fliege  and  B.F. Svaiter, Steepest descent methods for multicriteria optimimization,
{Math. Methods Oper. Res.} {51} 479--494 (2000).

\bibitem{Fukudainexact} E.H. Fukuda  and L.M. Gra\~na Drummond, Inexact projected gradient method for vector optimization,
{Comput. Optim. Appl.} 54 473--493 (2013).

\bibitem{fu-mauricio} E.H. Fukuda  and  L.M. Gra\~na Drummond, On the convergence of the projected gradient method for vector
optimization, {Optimization} {60} 1009--1021 (2011).

\bibitem{grana-iusem}  L.M. Gra\~na Drummond and  A.N. Iusem, A projected gradient method for vector optimization problems,
{Comput. Optim. Appl.} {28} 5--30 (2004).

\bibitem{gme} L.M. Gra\~na Drummond, N. Maculan  and  B.F. Svaiter, On the choice
of parameters for the weighting method in vector optimization,
{Math. Program.} {111} 201--216 (2008).

\bibitem{grana-svaiter} L.M. Gra\~na Drummond and  B.F. Svaiter, A steepest descent method for vector optimization,
{J. Comput. Appl. Math.} {175} 395--414 (2005).

\bibitem{Entropy} A.N. Iusem, B.F. Svaiter  and  M. Teboulle, Entropy-like proximal methods in convex programming,
{ Math. Oper. Res.} {19} {790--814} (1994).

\bibitem{jahn0} J. Jahn, Scalarization in vector optimization, {Math. Program.} {29} (1984) 203--218.

\bibitem{luc}  D.T. Luc, {Theory of Vector Optimization}. Lecture Notes in Economics and Mathematical Systems,
Springer, Berlin, 319 (1989).

\bibitem{Japonesa} Y. Sawaragi, H. Nakayama  and  T. Tanino, Theory of Multiobjective Optimization, {Academic Press}, {Orlando},
(1985).

\end{thebibliography}
\end{document}